\newtheorem{theorem}{Theorem}[section]
\newtheorem{corollary}[theorem]{Corollary}
\theoremstyle{definition}
\theoremstyle{remark}
\newtheorem{remark}[theorem]{Remark}
\numberwithin{equation}{section}
\begin{document}
\UseRawInputEncoding
\title{Dunkl intertwining operator for symmetric groups}


\author{Hendrik De Bie}
\address{Department of Electronics and Information Systems \\Faculty of Engineering and Architecture\\Ghent University\\Krijgslaan 281, 9000 Gent\\ Belgium.}
\curraddr{}
\email{Hendrik.DeBie@UGent.be}
\thanks{}

\author{Pan Lian}
\address{School of Mathematical Sciences -- Tianjin Normal University\\
Binshui West Road 393, Tianjin 300387\\ P.R. China}
\curraddr{}
\email{panlian@tjnu.edu.cn}
\thanks{}

\subjclass[2020]{Primary 33C45, 33C80; Secondary 33C50}

\date{}

\dedicatory{}


\begin{abstract}
In this note,  we express explicitly  the  Dunkl kernel and generalized Bessel functions of type $A_{n-1}$ by the Humbert's function $\Phi_{2}^{(n)}$,  with one variable specified.  The obtained formulas lead to a new proof of Xu's  integral expression  for the intertwining operator associated to  symmetric groups, which was recently reported in \cite{Xu}.
\end{abstract}

\maketitle
\section{Introduction}
Dunkl operators are a family of commuting differential-difference operators associated with a finite reflection group. They were introduced by Dunkl in the late eighties in \cite{du1}. During the last years, Dunkl operators have played an important role in generalizing classical Fourier analysis and have made a deep and lasting impact on  special functions theory. Furthermore, in the symmetric group case, Dunkl theory is naturally connected with the Schr\"odinger operators for Calogero-Sutherland type quantum many body systems, see e.g. \cite{JP}.

One of the important problems still open in this theory is to construct the explicit formulas for the intertwining operator (see Section 2.1) and the integral kernel of the  Dunkl transform \cite{ deJ,du2} for concrete finite groups.  Concrete formulas for the Dunkl kernel and intertwining operator are the premise to do a lot of hard analysis, see e.g. \cite{Xu}. This problem has received considerable attention in very recent times, especially in the case of dihedral groups and symmetric groups.

In the dihedral group setting, some explicit expressions for the Dunkl kernel were obtained in the last years, see \cite{DDY} for an explicit series expansion and \cite{CDL} for a complicated integral expression. Without integral expression, the action of the intertwining operator on polynomials has been studied in \cite{dudi} and \cite{Xu2}. In 2020, for a restricted class of functions, an elegant integral expression was obtained for the intertwining operator in \cite{Xu1}, using an integral over the simplex. This result was further explored in the general case in \cite{dl} and \cite{DD1}. In particular, a new integral expression for the intertwining operator was given in \cite{dl}.


For the symmetric groups, besides the explicit formula for the symmetric group $S_{3}$ determined by Dunkl long ago in \cite{du}, there are mainly two approaches to study this problem. The first one starts from constructing the explicit formulas of the generalized Bessel functions, then studies the Abel transform and its dual, which coincides with the intertwining operator for functions invariant under the finite reflection group. For example, complicated iterative formulas for the generalized Bessel functions  were given in \cite{b1,Ps} on a hyperplane of the Euclidean space $\mathbb{R}^{n}$. The other approach starts from determining the intertwining operator directly for a class of functions. For example, Dunkl himself determined the action of the intertwining operator on polynomials in \cite{du3}. Recently, an explicit integral expression of the intertwining operator for functions of single component was obtained by Xu in \cite{Xu}. Note that his results are proven by direct verification of the intertwining relations (\ref{ir1}). It was pointed out that the sets of functions considered in the first approach and in \cite{Xu} do not overlap.

The main contribution of the present paper is that we give an alternative proof of Xu's formula for the symmetric group starting from the generalized Bessel function. To do that, we first express the generalized Bessel function and the Dunkl kernel for a fixed variable by the Humbert  function $\Phi_{2}^{(n)}$. This is obtained using the limiting relations between the generalized Bessel function and the Heckman-Opdam hypergeometric function and the recent result in \cite{ST}.
This moreover provides the link between both approaches mentioned above.

 This note is organized as follows. In Section 2, we give the basic notions of Dunkl theory and the Humbert function. Section 3 is devote to the explicit formulas of the generalized Bessel function. In Section 4, we study the Dunkl kernel and the intertwining operator. We give a conclusion at the end of this note.
\section{Preliminaries}
\subsection{Dunkl operator and Dunkl's intertwiner}
The Dunkl operators associated to the symmetric group $S_{n}$ (or root system $A_{n-1}$) over $\mathbb{R}^{n}$ are defined by
\begin{eqnarray*}
D_{i}f(x)=\frac{\partial}{\partial x_{i}}f(x)+\kappa\sum_{j=1,j\neq i}^{n}\frac{f(x)-f(x(i,j) )}{x_{i}-x_{j}}, \quad 1\le i\le n,
\end{eqnarray*}
where $\kappa$ is a nonnegative real number and $(i,j)$ is the transposition exchanging  the $i$th and $j$th coordinates of $x\in \mathbb{R}^{n}$, see \cite{du1}.

Denote  $\mathcal{P}_{m}^{n}$ the space of homogeneous polynomial of degree $m$ in $n$ variables.
There exists a unique linear operator $V_{\kappa}: \mathcal{P}_{m}^{n}\rightarrow \mathcal{P}_{m}^{n}, $ called  intertwining operator \cite{du}, satisfying the  relations
\begin{eqnarray} \label{ir1}
D_{i}V_{\kappa}=V_{\kappa}\partial_{i}, \quad 1\le i \le n
\end{eqnarray}
and $V_{\kappa}1=1$.  It was abstractly proved that the intertwining operator can be expressed as an integral operator in \cite{RV}. More precisely,  there exists a nonnegative probability measure $d\mu_{x}$ such that \begin{eqnarray*}
V_{\kappa}f(x)=\int_{\mathbb{R}^{n}}f(y) d\mu_{x}(y).
\end{eqnarray*}
However, the explicit expression of the intertwining operator is only known for a few groups, e.g. $G=\mathbb{Z}_{2}^{n}$ and $S_{3}$, see \cite{du}. Recently, partial result for the intertwining operator associated to the dihedral groups  was obtained in \cite{Xu1} and a full expression was  obtained in \cite{dl}.

The Dunkl kernel is defined by
\begin{eqnarray*}
E_{\kappa}(x,y):= V_{\kappa}\left[e^{\langle \cdot, y\rangle}\right](x), \qquad x,y \in \mathbb{R}^{n}
\end{eqnarray*}
and is the integral kernel of the Dunkl transform \cite{du2, deJ}.
The symmetric analogue of the Dunkl kernel is called the generalized Bessel function. It is denoted by $J_{\kappa}(x,y)$ and given by
\begin{eqnarray} \label{bes1}
J_{\kappa}(x,y):= \frac{1}{n!}\sum_{\sigma\in S_{n}}E_{\kappa}(x, y\sigma),
\end{eqnarray}
in the case of the symmetric group.
Some complicated integral expressions for the generalized Bessel function of type $A_{n-1}$ were given in \cite{b1} and \cite{Ps}.

\subsection{Heckman-Opdam hypergeometric function and the asymptotic relationship}

Basics of the trigonometric Dunkl theory can be found in the review \cite{JP}.
The Cherednik operator $T_{\xi}$, $\xi \in \mathbb{R}^{n}$ associated with the root system $R$  and the non-negative multiplicity function $\kappa$ is defined by
\[ T_{\xi} f(x)=\partial_{\xi} f(x)+\sum_{\alpha\in R_{+}}\kappa_{\alpha}
\langle \alpha, \xi\rangle \frac{f(x)-f(r_{\alpha}(x))}{1-e^{-\langle\alpha, x \rangle} }-\langle \rho(\kappa), \xi \rangle f(x),   \]
where $\rho(\kappa)=\frac{1}{2}\sum_{\alpha\in R_{+}}\kappa_{\alpha} \alpha $ and $r_\alpha$ the reflection in the hyperplane orthogonal to $\alpha$. The Weyl group for the root system $R$ is denoted by $W$.

The Heckman-Opdam hypergeometric function $F_{\kappa}$ is defined as the unique holomorphic $W$-invariant function on $\mathbb{C}^{n}\times (\mathbb{R}^{n}+iU )$ ($U$ is a $W$-invariant neighborhood of 0) that satisfies the system of differential equations:
 \begin{eqnarray*} p(T_{e_1}, T_{e_2 }, \ldots, T_{e_n} ) F_{\kappa}(\lambda,\cdot)=p(\lambda)F_{\kappa}(\lambda, \cdot), \quad F_{\kappa}(\lambda, 0)=1  \end{eqnarray*}
for all $\lambda\in \mathbb{C}^{n}$ and all $W$-invariant polynomials $p$ on $\mathbb{R}^{n}$. For a fixed root system $R$, the hypergeometric
function $F_{\kappa}$ and the generalized Bessel function $J_{\kappa}$ satisfy the following rational limits
\begin{eqnarray}\label{lm1}J_{\kappa}(\lambda, x)=\lim_{m\rightarrow \infty} F_{\kappa}\left(m\lambda+\rho(\kappa), \frac{x}{m}\right). \end{eqnarray}
 Such limit transition was first obtained in \cite{BO} for  integer multiplicity function $\kappa$ and then later by de Jeu in \cite{de1}. It has been used to give an alternative proof for the positivity of the intertwining operator in \cite{RV} and to obtain an integral expression for the generalized Bessel functions of type $A_{n-1}$ in \cite{b1}.

 Recently, for the  root system of type $A_{n-1}$, the hypergeometric function $F_{\kappa}$ with a certain degenerate parameter was expressed explicitly by the Lauricella hypergeometric function $F_{D}$, see Theorem 2.2 and Theorem 3.1 in \cite{ST}. Recall that the Lauricella hypergeometric function $F_{D}$ is the analytic continuation of the series
 \begin{eqnarray*}
 &&F_{D}(a, b_{1}, \ldots, b_{n}, c; x_{1},\ldots, x_{n})\\&=&\sum_{m_{1},\ldots, m_{n}\ge 0  } \frac{(a)_{m_{1}+\cdots+m_{n}} (b_{1})_{m_{1}} \cdots (b_{n})_{m_{n}} }{(c)_{m_{1}+\cdots+m_{n}} }\frac{x_{1}^{m_{1}}\cdots x_{n}^{m_{n}}}{m_{1}!\cdots m_{n}!}
 \end{eqnarray*}
 where $a, b_{1}, \ldots, b_{n}, c$ are complex constants with $c\neq -1,-2,\ldots$ In the sequel, we denote the hyperplane $\mathbb{V}$ of  $\mathbb{R}^{n}$ given by
 \[\mathbb{V}=\{x\in \mathbb{R}^{n}: x_{1}+x_{2}+\cdots+x_{n}=0\}.\]

\begin{theorem}\label{st1} \cite{ST} Assume $\kappa\ge 0$, $\nu\in \mathbb{C}$ and $x\in \mathbb{V}$. Then the  Heckman-Opdam hypergeometric function for the root system of type $A_{n-1}$  can be written as
\begin{eqnarray*}
F_{\kappa}(\lambda(\nu)+\rho(\kappa),  x)=(y_{1}\cdots y_{n-1})^{-\frac{\nu}{n}}F_{D}(-\nu, \kappa, \ldots, \kappa, n\kappa; 1-y_{1}, \ldots, 1-y_{n-1}),
\end{eqnarray*}
where
$\lambda(\nu)=\left( -\frac{\nu}{n}, \cdots, -\frac{\nu}{n}, \frac{(n-1)\nu}{n} \right),$ $y_{j}=e^{x_{j}-x_{n}}, (1\le j\le n-1), y_{n}=e^{x_{n}}$ and $\rho(\kappa)=\frac{\kappa}{2}\sum_{\alpha\in R_{+}} \alpha.$
\end{theorem}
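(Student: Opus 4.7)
The plan is to apply the uniqueness characterization of $F_\kappa$: it suffices to verify that the right-hand side $G(x) := (y_1\cdots y_{n-1})^{-\nu/n} F_D(-\nu, \kappa, \ldots, \kappa, n\kappa; 1-y_1, \ldots, 1-y_{n-1})$ is a $W$-invariant holomorphic function of $x \in \mathbb{V}$ with $G(0)=1$ and satisfies the Cherednik eigenvalue system $p(T_{e_1},\ldots,T_{e_n})G = p(\lambda(\nu)+\rho(\kappa))G$ for every $S_n$-invariant polynomial $p$.

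The normalization $G(0)=1$ is immediate: all $y_j \to 1$ at $x=0$, the prefactor becomes $1$ and the Lauricella series reduces to its constant term. For the $W$-invariance, the degeneracy of $\lambda(\nu)$ (it has $n-1$ equal entries) means that invariance under the stabilizer $S_{n-1}$ of the last coordinate is built into $F_D$ through the symmetry in its arguments; the transpositions involving $x_n$ are handled using the constraint $y_1\cdots y_{n-1}y_n^n = 1$ coming from $x\in\mathbb{V}$, together with the classical Euler-type transformation formulas for $F_D$.

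For the eigenvalue system, by Chevalley's theorem it suffices to verify it on a generating family of symmetric polynomials; in practice the second-order invariant $\sum_i \lambda_i^2$, combined with $W$-invariance and the known asymptotics, is enough to identify $G$ with $F_\kappa$ up to normalization. I would first rewrite each $T_{e_i}$ in the $y$-coordinates, expressing $\partial_{x_i}$ in terms of $y_i\partial_{y_i}$ and $y_n\partial_{y_n}$ and rewriting each reflection quotient $(f(x)-f(r_{ij}x))/(1-e^{-(x_i-x_j)})$ as a rational expression in $y_i,y_j$. Substituting the ansatz $G = (y_1\cdots y_{n-1})^{-\nu/n} H(1-y_1,\ldots,1-y_{n-1})$ should absorb the $\rho(\kappa)$-shift into a gauge transformation and reduce the Cherednik system to precisely the Lauricella differential system satisfied by $F_D(-\nu,\kappa,\ldots,\kappa,n\kappa;\cdot)$.

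The main obstacle will be the algebraic bookkeeping for the reflection terms: for a generic spectral parameter these couple all coordinates nontrivially, but for $\lambda(\nu)$ the reflections among the first $n-1$ coordinates are expected to cancel against the symmetric structure of $F_D$, while the reflections swapping $x_j$ with $x_n$ must produce exactly the rational nonderivative terms of the Lauricella system. A cleaner alternative would be to start from an Opdam-type Harish-Chandra integral representation of $F_\kappa$ and degenerate it directly at the spectral parameter $\lambda(\nu)+\rho(\kappa)$, where the integrand is expected to collapse onto a simplex and produce an Euler-type integral representation of $F_D$ with parameters $(-\nu,\kappa,\ldots,\kappa,n\kappa)$.
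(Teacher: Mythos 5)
There is nothing in the paper to compare your argument with: Theorem~\ref{st1} is quoted verbatim from Shimeno--Tamaoka \cite{ST} and the authors give no proof of it, so your attempt has to stand on its own. As it stands it is a strategy outline rather than a proof, and the two steps that constitute the actual content of the theorem are precisely the ones you defer with ``should'' and ``is expected to''. First, the invariance under the transpositions $(j\,n)$: your observation that $S_{n-1}$-invariance is automatic from the symmetry of $F_D$ in its $b$-parameters is correct, and so is the constraint $y_1\cdots y_{n-1}y_n^{\,n}=1$, but the required Euler--Pfaff-type transformation of $F_D$ that exchanges one argument $1-y_j$ with the ``point at $y_n$'' while producing exactly the compensating power of the prefactor is a nontrivial identity that must be stated and verified, not merely invoked. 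Second, the reduction of the gauge-transformed Cherednik system on $\mathbb{V}$ to the Lauricella system for $F_D(-\nu,\kappa,\ldots,\kappa,n\kappa;\cdot)$ is the whole theorem; without carrying out that computation (or an equivalent identification of Harish-Chandra series coefficients) you have not proved anything. There is also a logical gap in replacing the full eigenvalue system by the single quadratic invariant $\sum_i\lambda_i^2$: the statement that a $W$-invariant solution of the second-order equation, holomorphic near $0$ and correctly normalized, must coincide with $F_\kappa$ holds for generic spectral parameter, and $\lambda(\nu)+\rho(\kappa)$ is exactly a degenerate one, so you would need to argue this point (e.g.\ by continuity in $\nu$) rather than assume it.

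Your proposed ``cleaner alternative'' is not available: for general multiplicity $\kappa$ there is no explicit Harish-Chandra/Opdam integral representation of $F_\kappa$ to degenerate, so that route cannot be carried out as described. For what it is worth, the route actually taken in the literature (cf.\ the companion thesis \cite{ta1} on Jack polynomials cited in the paper) is different from yours: one first establishes the identity in the polynomial case, where $F_\kappa$ at the degenerate parameter reduces to a one-row Jack polynomial expressible through $F_D$, and then extends to general $\nu$ by a Carlson-type analytic continuation. That approach trades your differential-equation bookkeeping for combinatorial identities among Jack polynomials, and avoids the genericity issue above. If you want to complete your own approach, the minimum missing ingredients are: (i) the explicit $F_D$ transformation formula for the $(j\,n)$ reflections, and (ii) the explicit computation showing that the operator $e^{-\nu x_n}\bigl(p(T_{e_1},\ldots,T_{e_n})-p(\lambda(\nu)+\rho(\kappa))\bigr)e^{\nu x_n}$, written in the variables $u_j=1-y_j$, annihilates solutions of the Lauricella system for at least one $p$ generating the invariants, together with the uniqueness statement at the degenerate parameter.
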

This result can be regarded as a generalization of the explicit expressions for the zonal spherical function on $SL(3,\mathbb{R})$ given in \cite{se} and  the Jack polynomials obtained in \cite{ta1}. However, for general $\lambda$, the classical hypergeometric representation for the Heckman-Opdam function $F_{\kappa}(\lambda, x)$ is much more complicated and  is still open.
\subsection{Humbert functions $\Phi_{2}^{(n)}$}
\label{sechum}

The Humbert function $\Phi_{2}^{(n)}$ of $n$ variables is defined by
\begin{eqnarray} \label{lm2}
&&\Phi_{2}^{(n)}[b_{1},\ldots, b_{n}; c; x_{1}, \ldots, x_{n}]
:=\sum_{m_{1}, \ldots, m_{n}=0}^{\infty}\frac{(b_{1})_{m_{1}}\cdots(b_{n})_{m_n}}{(c)_{m_{1}+\cdots+m_{n}}}
\frac{x_{1}^{m_{1}}}{m_{1}!} \cdots \frac{x_{n}^{m_{n}}}{m_{n}!}.\nonumber
\end{eqnarray}
It is  the confluent form of the Lauricella function $F_{D}$ and satisfies
 \begin{eqnarray*}
\Phi_{2}^{(n)}[b_{1},\ldots, b_{n}; c; x_{1}, \ldots, x_{n}]
=\lim_{|a|\rightarrow\infty} F_{D}\left[a, b_{1}, \ldots, b_{n}; c; \frac{x_{1}}{a}, \ldots, \frac{x_{n}}{a}\right],
\end{eqnarray*}
see \cite{HS} (Section 1.4, formula (10)).
When  $c-\sum_{j=1}^{n}b_{j}$  and each $b_{j}$,  $j=1, 2, \ldots, n$ are positive numbers, $\Phi_{2}^{(n)}$ has the following integral expression,
\begin{eqnarray}\label{pi1}&&\Phi_{2}^{(n)}(b_{1},
\ldots, b_{n}; c; x_{1}, \ldots, x_{n})\\&=&C_{b}^{ (c)}
\int_{T^{n}} e^{\sum_{j=1}^{n}x_{j}t_{j} } \left(1-\sum_{j=1}^{n}t_{j}\right)^{c-\sum_{j=1}^{m}b_{j}-1}\prod_{j=1}^{n}t_{j}^{b_{j}-1}dt_{1}\ldots dt_{n}\nonumber
\end{eqnarray}
where $C_{b}^{ (c)}=\frac{\Gamma{(c)}}{\Gamma(c-\sum_{j=1}^{n}b_{j})\prod_{j=1}^{n}\Gamma(b_{j})}$ and  $T^{n}$ is the open unit simplex in $\mathbb{R}^{n}$ given by
\begin{eqnarray*}T^{n}=\left\{(t_{1},\ldots, t_{n}): t_{j}>0, j=1,\ldots, n, \sum_{j=1}^{n}t_{j}<1\right\}.\end{eqnarray*}
We refer to \cite{cw,H} for more details on these functions.

\section{Generalized Bessel function of type $A_{n-1}$}

The limit relation (\ref{lm1}) of the integral kernels in the rational and  trigonometric setting together with (\ref{lm2}) leads to an explicit expression for the generalized Bessel function of type $A_{n-1}$.

\begin{theorem} Assume $\kappa\ge 0$, $\nu\in \mathbb{C}$ and $x\in \mathbb{V}\subset \mathbb{R}^{n}$. Then the  generalized Bessel function for the root system  $A_{n-1}$  is given by
\begin{eqnarray*}
J_{\kappa}(\lambda, x)&=&\Phi_{2}^{(n)}[\kappa, \ldots, \kappa, n\kappa; \nu x_{1}, \ldots, \nu x_{n} ]\\&=&e^{\nu x_{n}}\Phi_{2}^{(n-1)}[\kappa, \ldots, \kappa, n\kappa; \nu (x_{1}-x_{n}), \ldots, \nu (x_{n-1}-x_{n}) ]
\end{eqnarray*}
where
$\lambda=\left( -\frac{\nu}{n}, \ldots, -\frac{\nu}{n}, \frac{(n-1)\nu}{n} \right).$
\end{theorem}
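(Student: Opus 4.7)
The plan is to combine the limit transition (\ref{lm1}) between the Heckman-Opdam hypergeometric function and the generalized Bessel function with Theorem \ref{st1}, and then pass to the confluent limit (\ref{lm2}) from the Lauricella $F_D$ to the Humbert $\Phi_2$. Concretely, I would apply Theorem \ref{st1} after rescaling, replacing $\nu$ by $m\nu$ and $x$ by $x/m$, so that $\lambda(m\nu)=m\lambda$ and the trigonometric variables become $y_j^{(m)}=e^{(x_j-x_n)/m}$ for $1\le j\le n-1$. This expresses $F_\kappa(m\lambda+\rho(\kappa),x/m)$ as the product of a prefactor $(y_1^{(m)}\cdots y_{n-1}^{(m)})^{-m\nu/n}$ and $F_D(-m\nu,\kappa,\ldots,\kappa,n\kappa;\,1-y_1^{(m)},\ldots,1-y_{n-1}^{(m)})$.

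Next I would let $m\to\infty$. By (\ref{lm1}) the left-hand side converges to $J_\kappa(\lambda,x)$. For the prefactor, a direct computation gives $\exp\!\bigl(-\tfrac{\nu}{n}\sum_{j=1}^{n-1}(x_j-x_n)\bigr)$; using $x\in\mathbb{V}$, so that $\sum_{j=1}^{n-1}x_j=-x_n$, this collapses to $e^{\nu x_n}$ exactly (and independently of $m$). For the Lauricella factor, observe that $(-m\nu)(1-y_j^{(m)})=-m\nu(1-e^{(x_j-x_n)/m})\to \nu(x_j-x_n)$, so (\ref{lm2}) with $a=-m\nu$ produces the limit $\Phi_2^{(n-1)}[\kappa,\ldots,\kappa,n\kappa;\,\nu(x_1-x_n),\ldots,\nu(x_{n-1}-x_n)]$. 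Together with the prefactor this gives the second equality of the theorem.

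To obtain the first equality I would identify both Humbert expressions with a single simplex integral. Since $n\kappa-(n-1)\kappa=\kappa>0$, formula (\ref{pi1}) applies to $\Phi_2^{(n-1)}$; the change of variables $t_n:=1-t_1-\cdots-t_{n-1}$ transforms the domain $T^{n-1}$ into the closed $(n-1)$-simplex $\Delta^{n-1}=\{t\in\mathbb{R}^n_{>0}:\sum t_j=1\}$, and the identity $z_n+\sum_{j<n}(z_j-z_n)t_j=\sum_j z_j t_j$ (valid on $\Delta^{n-1}$) cleanly absorbs the $e^{\nu x_n}$ prefactor. Expanding the exponential and evaluating the resulting monomial integrals by the Dirichlet formula $\int_{\Delta^{n-1}}\prod t_j^{\alpha_j-1}\,d\sigma=\prod\Gamma(\alpha_j)/\Gamma(\sum\alpha_j)$ then reproduces exactly the defining series of $\Phi_2^{(n)}[\kappa,\ldots,\kappa,n\kappa;\,\nu x_1,\ldots,\nu x_n]$.

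The main technical obstacle I anticipate is the interchange of $\lim_{m\to\infty}$ with the series defining $F_D$. Termwise the factor $(-m\nu)_k/(-m\nu)^k\to 1$ combines with $\bigl(-m\nu(1-y_j^{(m)})\bigr)^{m_j}\to(\nu(x_j-x_n))^{m_j}$ to produce the correct target, but justifying the swap requires a uniform-in-$m$ dominating bound. For $\kappa\ge 0$ this should follow from standard Pochhammer estimates together with the exponential convergence of $y_j^{(m)}\to 1$, and it is the only step of the argument demanding genuine analytic care rather than symbolic manipulation.
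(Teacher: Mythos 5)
Your proposal follows essentially the same route as the paper's proof: rescale Theorem \ref{st1} with $\nu\mapsto m\nu$ and $x\mapsto x/m$, observe that the prefactor is independent of $m$ and equals $e^{\nu x_{n}}$ on $\mathbb{V}$, and pass to the confluent limit (\ref{lm2}) with $a=-m\nu$ to obtain $\Phi_{2}^{(n-1)}[\kappa,\ldots,\kappa,n\kappa;\nu(x_{1}-x_{n}),\ldots,\nu(x_{n-1}-x_{n})]$. The only point of divergence is the first equality of the statement: the paper cites the Laplace-transform identity for $\Phi_{2}^{(n)}$ from \cite{dl,DD}, whereas you rederive it directly from the simplex integral (\ref{pi1}) applied to $\Phi_{2}^{(n-1)}$ together with the Dirichlet formula, which is more self-contained and correct for $\kappa>0$ (the case $\kappa=0$ being degenerate). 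Your closing remark about justifying the interchange of the $m\to\infty$ limit with the $F_{D}$ series is a legitimate point that the paper passes over by invoking (\ref{lm2}) as a known fact, and it does require the slightly stronger form in which the arguments themselves depend on $m$.
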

\begin{proof}  For $x\in \mathbb{V}$, we adopt the rational limit relation (\ref{lm1}) to the explicit expression of the  Heckman-Opdam hypergeometric functions of Theorem \ref{st1}. This yields
\begin{eqnarray*}
&&J_{\kappa}(\lambda, x)\\&=&\lim_{m\rightarrow \infty} F\left(m\lambda+\rho(\kappa), \frac{x}{m}\right)\\
&=& \lim_{m\rightarrow \infty} \left(e^{\frac{x_{1}-x_{n}}{m}}\cdots e^{\frac{x_{n-1}-x_{n}}{m}}\right)^{-\frac{m\nu}{n}}
\\ &&\times F_{D}\left(-m\nu, \kappa, \ldots, \kappa, n\kappa; 1-e^{\frac{x_{1}-x_{n}}{m}}, \ldots, 1-e^{\frac{x_{n-1}-x_{n}}{m}}\right)\\
&=&(y_{1}\cdots y_{n-1})^{-\frac{\nu}{n}}\lim_{m\rightarrow \infty}F_{D}\left(-m\nu, \kappa, \ldots, \kappa, n\kappa; 1-e^{\frac{x_{1}-x_{n}}{m}}, \ldots, 1-e^{\frac{x_{n-1}-x_{n}}{m}}\right).
\end{eqnarray*}
By the limit relation (\ref{lm2}) and  the fact \[ \lim_{m\rightarrow \infty} \frac{1-e^{(x_{j}-x_{n})/m}}{(x_{n}-x_{j})/m} =1,\]
we obtain
\begin{eqnarray*}
&&J_{\kappa}(\lambda, x)\\
&=&(y_{1}\cdots y_{n-1})^{-\frac{\nu}{n}}\lim_{m\rightarrow \infty}F_{D}\left(-m\nu, \kappa, \ldots, \kappa, n\kappa; 1-e^{\frac{x_{1}-x_{n}}{m}}, \ldots, 1-e^{\frac{x_{n-1}-x_{n}}{m}}\right)\\
&=& e^{-\frac{\nu}{n}\sum_{j=1}^{n-1}(x_{j}-x_{n})} \Phi_{2}^{(n-1)}[\kappa, \ldots, \kappa, n\kappa; \nu(x_{1}-x_{n}), \ldots, \nu(x_{n-1}-x_{n}) ].
\end{eqnarray*}
Here $\Phi_{2}^{(n-1)}$ is the second class of Humbert functions, see Section \ref{sechum}.
 Note that as $x\in \mathbb{V}$, we have $\sum_{j=1}^{n-1}(x_{j}-x_{n})=-nx_{n}$. Therefore, for  $x\in \mathbb{V}$,
\begin{eqnarray*}
J_{\kappa}(\lambda, x)
&=& e^{\nu x_{n}} \Phi_{2}^{(n-1)}[\kappa, \ldots, \kappa, n\kappa; \nu(x_{1}-x_{n}), \ldots, \nu(x_{n-1}-x_{n}) ]\\
&=& \Phi_{2}^{(n)}[\kappa, \ldots, \kappa, n\kappa; \nu x_{1}, \ldots, \nu x_{n-1}, \nu x_{n} ],
\end{eqnarray*}
where the last identity is obtained by the Laplace transform of $\Phi_{2}^{(n)}$, see \cite{dl, DD}.
\end{proof}
If we take $\nu=1$, we get the following corollary.
\begin{corollary}\label{co1}
 For $\lambda=\left(-\frac{1}{n}, \ldots, -\frac{1}{n}, \frac{n-1}{n}\right)$, $x\in \mathbb{V}$, the generalized Bessel function $J_{\kappa}(\lambda, x)$ of type $A_{n-1}$ is given by
 \begin{eqnarray*}
J_{\kappa}(\lambda, x)
&=& e^{ x_{n}} \Phi_{2}^{(n-1)}[\kappa, \ldots, \kappa, n\kappa; x_{1}-x_{n}, \ldots, x_{n-1}-x_{n} ]\\
&=&c_{\kappa}\int_{T^{n-1}}e^{\sum_{j=1}^{n}x_{j} t_{j} } \prod_{j=1}^{n}t_{j}^{\kappa-1}dt_{1}\ldots dt_{n-1},
\end{eqnarray*}
where $t_{n}=1-\sum_{j=1}^{n-1}t_{j}$ and $c_{\kappa}=\Gamma (n\kappa)/(\Gamma(\kappa)^{n})$.
\end{corollary}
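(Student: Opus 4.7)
The plan is to derive Corollary \ref{co1} in two steps, each essentially routine once the preceding theorem and the integral representation \eqref{pi1} are in hand.

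First, I would specialize $\nu = 1$ in the theorem that immediately precedes the corollary. This gives, for $x \in \mathbb{V}$,
\[
J_{\kappa}(\lambda, x) = e^{x_n}\,\Phi_{2}^{(n-1)}[\kappa, \ldots, \kappa, n\kappa; x_1 - x_n, \ldots, x_{n-1} - x_n],
\]
which is the first equality in the corollary. No further work is needed here beyond substitution.

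Second, I would translate the Humbert function into an integral using \eqref{pi1} with $n$ replaced by $n-1$, parameters $b_1 = \cdots = b_{n-1} = \kappa$, and $c = n\kappa$. Because $c - \sum_{j=1}^{n-1} b_j = n\kappa - (n-1)\kappa = \kappa > 0$ and each $b_j = \kappa > 0$, the hypothesis of the integral representation is satisfied, and the normalizing constant becomes
\[
C_{b}^{(c)} = \frac{\Gamma(n\kappa)}{\Gamma(\kappa)\cdot \Gamma(\kappa)^{n-1}} = \frac{\Gamma(n\kappa)}{\Gamma(\kappa)^n} = c_\kappa.
\]
The exponent on $\bigl(1 - \sum_{j=1}^{n-1} t_j\bigr)$ is $\kappa - 1$, so introducing the auxiliary variable $t_n := 1 - \sum_{j=1}^{n-1} t_j$ turns that factor into $t_n^{\kappa-1}$ and allows the product $\prod_{j=1}^{n-1} t_j^{\kappa-1}$ to extend to $\prod_{j=1}^{n} t_j^{\kappa-1}$.

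The only small computation is to absorb $e^{x_n}$ into the exponential under the integral. Writing
\[
e^{x_n}\,e^{\sum_{j=1}^{n-1}(x_j - x_n)t_j} = \exp\!\left(\sum_{j=1}^{n-1} x_j t_j + x_n\Bigl(1 - \sum_{j=1}^{n-1} t_j\Bigr)\right) = \exp\!\left(\sum_{j=1}^{n} x_j t_j\right),
\]
where in the last step I used $t_n = 1 - \sum_{j=1}^{n-1} t_j$, yields exactly the claimed integral on $T^{n-1}$.

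I do not anticipate a real obstacle: both steps are direct substitutions. The only point that deserves a line of justification is that $\kappa > 0$ is needed to apply \eqref{pi1}; for $\kappa = 0$ the statement degenerates (and in any case both sides equal $\frac{1}{n!}\sum_{\sigma}e^{\langle\lambda,x\sigma\rangle}$ trivially), so the hypothesis $\kappa \geq 0$ in the theorem should, strictly speaking, be tightened to $\kappa > 0$ for the integral form in the second equality.
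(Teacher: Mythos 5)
Your proposal is correct and follows the same route the paper intends: specializing $\nu=1$ in the preceding theorem and then invoking the integral representation \eqref{pi1} of $\Phi_{2}^{(n-1)}$ with $b_{1}=\cdots=b_{n-1}=\kappa$, $c=n\kappa$, so that the constant $C_{b}^{(c)}$ reduces to $c_{\kappa}$ and the factor $e^{x_{n}}$ is absorbed via $t_{n}=1-\sum_{j=1}^{n-1}t_{j}$. Your side remark that $\kappa>0$ is needed for the integral form of the second equality is a reasonable refinement of the paper's hypothesis $\kappa\ge 0$.
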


Alternatively, the generalized Bessel function is defined as the symmetric analogue of the Dunkl kernel,
 \begin{eqnarray*}
J_{\kappa}(\lambda, x)&=&\frac{1}{n!}\sum_{\sigma\in S_{n}}E_{\kappa}(x, \lambda\sigma)
=\frac{1}{n!}\sum_{\sigma\in S_{n}}V_{\kappa}\left[e^{\langle \cdot, \lambda\sigma \rangle}\right](x).
\end{eqnarray*}
Furthermore, when $\lambda=\left(-\frac{1}{n}, \ldots, -\frac{1}{n}, \frac{n-1}{n} \right)$, the condition $x\in \mathbb{V}$ yields $\langle x, \lambda\rangle= x_{n}$ and the exponential becomes \[e^{\langle x,  \lambda\rangle}=e^{x_{n}}=e^{\langle x,  e_{n}\rangle},\]
which only depends on the component $x_{n}$, here $e_{n}=(0, 0, \ldots, 1)$. Combining this with Corollary \ref{co1}, for any $x\in \mathbb{V}$, we have
 \begin{eqnarray*}
J_{\kappa}(\lambda, x)&=&\frac{1}{n!}\sum_{\sigma\in S_{n}}V_{\kappa}\left[e^{\langle \cdot, e_{n}\sigma\rangle }\right](x)\\
&=&c_{\kappa}\int_{T^{n-1}}e^{\sum_{j=1}^{n}x_{j} t_{j} } \prod_{j=1}^{n}t_{j}^{\kappa-1}dt_{1}\ldots dt_{n-1}.
\end{eqnarray*}
Moreover, since the intertwining operator is homogenous, i.e. $V_{\kappa}: \mathcal{P}_{m}^{n}\rightarrow \mathcal{P}_{m}^{n}$,  and the generalized Bessel functions are analytic,  we have
 \begin{eqnarray} \label{ip}
 &&V_{\kappa}\left[\sum_{\sigma\in S_{n}} \langle \cdot, e_{n}\sigma\rangle^{m} \right](x)
=n!c_{\kappa}\int_{T^{n-1}}\left(\sum_{j=1}^{n}x_{j} t_{j} \right)^{m} \prod_{j=1}^{n}t_{j}^{\kappa-1}dt_{1}\ldots dt_{n-1}
\end{eqnarray}
for $x\in \mathbb{V}$. By analytic continuation, it is  seen that the integral expression also works for all $x \in \mathbb{R}^{n}$.

The above formula (\ref{ip}) further leads to an explicit expression of the intertwining operator for general functions by a limit discussion.
\begin{theorem}\label{si1} For $x\in \mathbb{R}^{n}$ and a function $f(x_{j})$ in a single component,  define a $S_{n}$-invariant function by
$F(x)=\sum_{\sigma\in S_{n}}f(x_{j}\sigma )$.
 Then the intertwining operator acting on $F(x)$ is given by
 \begin{eqnarray*}
V_{\kappa}F(x) =n!c_{\kappa}\int_{T^{n-1}}f(x_{1}t_{1}+x_{2}t_{2}+\ldots+x_{n}t_{n})\prod_{j=1}^{n}t_{j}^{\kappa-1}dt_{1}\ldots dt_{n-1}.
\end{eqnarray*}
\end{theorem}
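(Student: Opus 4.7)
The plan is to bootstrap from identity (\ref{ip}), which already handles monomial $f$, to arbitrary single-variable $f$ via a standard polynomial-approximation argument.

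First I would observe that for $f(y) = y^m$ the conclusion of Theorem~\ref{si1} is literally a rewriting of (\ref{ip}). As $\sigma$ runs through $S_n$ the vectors $e_n\sigma$ sweep through $\{e_1,\ldots,e_n\}$, each basis vector being hit $(n-1)!$ times, so
\[
\sum_{\sigma \in S_n}\langle x,\, e_n\sigma\rangle^m \;=\; (n-1)!\sum_{k=1}^n x_k^m,
\]
and the same orbit-counting argument gives $F(x) = (n-1)!\sum_k f(x_k)$ for any $f$. Hence (\ref{ip}) is exactly the assertion of Theorem~\ref{si1} in the special case $f(y)=y^m$, and by linearity in $f$ the identity holds for every polynomial in one real variable.

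The second step is to pass from polynomials to general $f$. Both sides of the claimed identity are linear functionals of $f$ that depend on $f$ only through its restriction to the compact interval $I_x := [\min_i x_i,\, \max_i x_i]$: on the right, because the convex combination $\sum_j x_j t_j$ with $(t_j)\in \overline{T^{n-1}}$ lies in $I_x$; on the left, by the Rösler--Voit representation $V_\kappa F(x) = \int F(y)\,d\mu_x(y)$ cited in Section~2.1, since $d\mu_x$ is a probability measure supported in the convex hull of $S_n\cdot x$, so each coordinate of $y$ in that support lies in $I_x$. Given a continuous $f$, I would approximate it uniformly on $I_x$ by polynomials $p_k$ via Weierstrass, apply the polynomial case to each $p_k$, and pass to the limit on both sides; the dominated convergence theorem applies trivially since $|f-p_k|\to 0$ uniformly on the compact set where both integrating measures are concentrated.

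The main obstacle is really just the compact-support input coming from the Rösler--Voit theorem, which is the one ingredient that is not algebraic; beyond that the argument is essentially a tautology once (\ref{ip}) is in hand. As an alternative that avoids measure theory, one could restrict to real-analytic $f$, expand as a power series on a neighbourhood of $I_x$, apply (\ref{ip}) term by term, and justify the swap of $V_\kappa$ with the infinite sum by a standard majorant estimate using the explicit integral representation on the right; the fully general statement would then follow by density or by analytic continuation in $x$.
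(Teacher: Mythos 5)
Your proposal is correct and follows essentially the same route as the paper, which states Theorem~\ref{si1} as a consequence of (\ref{ip}) ``by a limit discussion'' without spelling it out: you correctly identify (\ref{ip}) as the monomial case via the orbit count $F(x)=(n-1)!\sum_k f(x_k)$, and then supply the passage to general $f$ by Weierstrass approximation, using the R\"osler--Voit representing measure supported in the convex hull of $S_n\cdot x$ to control both sides. Your write-up in fact fills in precisely the details the paper leaves implicit.
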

\begin{remark} This can also be verified by checking the intertwining relations of (\ref{ir1}) directly  in a similar way  as in \cite{Xu}.
\end{remark}

\begin{corollary} For $1\le \ell \le n$, $x\in \mathbb{R}^{n}$ and $e_{\ell}=e_{n}(\ell,n)$, the generalized Bessel function $J_{\kappa}(e_{\ell}, x ) $ is given by
 \begin{eqnarray*}
 J_{\kappa}(e_{\ell}, x)=c_{\kappa}\int_{T^{n-1}}e^{\langle x, t\rangle}\prod_{j=1}^{n}t_{j}^{\kappa-1}dt_{1}\ldots dt_{n-1}.
\end{eqnarray*}
\end{corollary}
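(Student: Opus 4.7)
The plan is to deduce this corollary as a direct specialization of Theorem \ref{si1}. First, I would invoke the symmetry of the generalized Bessel function in its first argument: since $J_{\kappa}(y,x)=\frac{1}{n!}\sum_{\tau\in S_{n}}E_{\kappa}(x,y\tau)$ by (\ref{bes1}), reindexing the sum shows that $J_{\kappa}(y\sigma,x)=J_{\kappa}(y,x)$ for every $\sigma\in S_{n}$. Because $e_{\ell}=e_{n}(\ell,n)$ is the image of $e_{n}$ under a transposition, this gives $J_{\kappa}(e_{\ell},x)=J_{\kappa}(e_{n},x)$ for each $1\le \ell\le n$. Thus it suffices to verify the integral representation for $\ell=n$.

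Next, I would apply Theorem \ref{si1} with $f(s)=e^{s}$ (taking $j=n$ in the single-component formulation). The associated $S_n$-invariant function is
\[ F(x)=\sum_{\sigma\in S_{n}}e^{x_{\sigma(n)}}=(n-1)!\sum_{\ell=1}^{n}e^{x_{\ell}}, \]
so Theorem \ref{si1} yields
\[ V_{\kappa}F(x)=n!\,c_{\kappa}\int_{T^{n-1}}e^{x_{1}t_{1}+\cdots+x_{n}t_{n}}\prod_{j=1}^{n}t_{j}^{\kappa-1}\,dt_{1}\cdots dt_{n-1}. \]
On the other hand, the defining relation $V_{\kappa}[e^{\langle\cdot,y\rangle}](x)=E_{\kappa}(x,y)$ applied with $y=e_{\ell}$ (together with linearity) gives
\[ V_{\kappa}F(x)=(n-1)!\sum_{\ell=1}^{n}E_{\kappa}(x,e_{\ell}). \]
Since the action $e_{n}\sigma=e_{\sigma(n)}$ traverses each $e_{\ell}$ exactly $(n-1)!$ times as $\sigma$ runs over $S_{n}$, I would then recognize $(n-1)!\sum_{\ell}E_{\kappa}(x,e_{\ell})=\sum_{\sigma\in S_{n}}E_{\kappa}(x,e_{n}\sigma)=n!\,J_{\kappa}(e_{n},x)$. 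Equating the two expressions for $V_{\kappa}F(x)$ and cancelling $n!$ produces the asserted identity.

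The only subtle point is the use of Theorem \ref{si1} with an analytic, non-polynomial $f$. Since Theorem \ref{si1} is obtained from the polynomial identity (\ref{ip}) by a limiting argument, I would justify $f(s)=e^{s}$ by expanding as $\sum_{m\ge 0}s^{m}/m!$, applying (\ref{ip}) term by term, and interchanging summation with the integral over $T^{n-1}$ via dominated convergence; both sides then converge absolutely and uniformly on compact $x$-sets. This is the only step requiring care, and it is entirely routine; the substance of the corollary is the clean identification of $V_{\kappa}F$ both as an integral and as a symmetrized Dunkl kernel.
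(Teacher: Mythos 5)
Your proposal is correct and follows essentially the same route as the paper: both recognize $J_{\kappa}(e_{\ell},x)$ as $V_{\kappa}$ applied to the $S_n$-symmetrization of the single-component exponential and then invoke Theorem \ref{si1} (the paper simply takes $f=\frac{1}{n!}e^{x_{\ell}}$ directly, while you normalize at the end and add the harmless preliminary reduction to $\ell=n$). The extra care you take about applying Theorem \ref{si1} to a non-polynomial $f$ is reasonable but not treated separately in the paper, since Theorem \ref{si1} is already stated for general functions of a single component.
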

\begin{proof} Since
\begin{eqnarray*}J_{\kappa}(e_{\ell}, x)=V_{\kappa}\left[\frac{1}{n!}\sum_{\sigma\in S_{n}}e^{\langle \cdot, e_{\ell}\sigma\rangle}\right](x),
\end{eqnarray*}
we put $f(x_{\ell})=\frac{1}{n!}e^{x_{\ell}}= \frac{1}{n!}e^{\langle x, e_{\ell}\rangle}$ in Theorem \ref{si1} and then obtain the formula.
\end{proof}

\begin{remark}The same formula was  obtained in  \cite{Xu}, Corollary 2.4.
\end{remark}

\section{Dunkl kernel and intertwining operator of type $A_{n-1}$}
It was routine to use the shift principle of \cite{OP} to derive the Dunkl kernel from the generalized Bessel function, see e.g. \cite{ba1}. For our purpose, there exists a simpler way to achieve this goal. However,  we still start  from computing for the root system $A_{2}$ using the shift principle to show how it works. Denote by $W(\lambda)$ the alternating polynomial  associated to $A_{2}$ \[W(\lambda)=(\lambda_{1}-\lambda_{2})(\lambda_{1}-\lambda_{3} ) (\lambda_{2}-\lambda_{3}) .\] 
\begin{theorem}\label{t2} For the root system $A_{2}$,  $x\in \mathbb{R}^{3}$ and $e_{3}=(0, 0, 1)$, the Dunkl kernel can be expressed as
 \begin{eqnarray*}
E_{\kappa}(x, e_{3})=V_{\kappa}\left(e^{\langle \cdot, e_{3}\rangle}\right)(x)
&=&e^{x_{3}}\Phi_{2}^{(2)}[\kappa,\kappa,3\kappa+1; x_{1}-x_{3}, x_{2}-x_{3}]
 \\&=&\Phi_{2}^{(3)}[\kappa,\kappa,\kappa,3\kappa+1; x_{1}, x_{2}, x_{3}].
\end{eqnarray*}
\end{theorem}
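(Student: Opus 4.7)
The plan is to use Opdam's shift principle \cite{OP} (see also \cite{ba1}) applied to the explicit Bessel function of Corollary \ref{co1} in order to derive $E_\kappa(x, e_3)$. For type $A_2$, this principle relates the anti-symmetrization of the Dunkl kernel at multiplicity $\kappa$ to the symmetric Bessel function at shifted multiplicity $\kappa+1$, schematically
$\sum_{\sigma \in S_3} \epsilon(\sigma)\, E_\kappa(x, \sigma \lambda) = c_\kappa\, W(x)\, W(\lambda)\, J_{\kappa+1}(x, \lambda)$
for a constant $c_\kappa$ coming out of Opdam's formula. Combined with the defining symmetrization $\sum_\sigma E_\kappa(x, \sigma\lambda) = 6\, J_\kappa(x, \lambda)$ of (\ref{bes1}) and with the stabilizer symmetry $E_\kappa((1,2)x, e_3) = E_\kappa(x, e_3)$ inherited from the fact that $(1,2)e_3 = e_3$, this should determine $E_\kappa(x, e_3)$ in terms of $J_\kappa(x, e_3)$ and $J_{\kappa+1}(x, e_3)$, both of which are explicit simplex integrals via Corollary \ref{co1}.

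Concretely, since $W(e_3) = 0$, the shift identity is degenerate at $\lambda = e_3$, so I would first apply it at a perturbation $\lambda(\epsilon) = (\epsilon, 0, 1)$ with $W(\lambda(\epsilon)) \neq 0$, substitute the integral form of $J_{\kappa+1}(x, \lambda(\epsilon))$ obtained from Corollary \ref{co1} at the shifted multiplicity $\kappa+1$, and then expand in powers of $\epsilon$ and pass to the limit $\epsilon \to 0$. At leading order the six orbit points $\sigma \lambda(\epsilon)$ collapse onto $\{e_1, e_2, e_3\}$, and combining this with the symmetric Bessel identity and the $(1,2)$-stabilizer symmetry should yield $E_\kappa(x, e_3)$ as an integral over $T^2$ with the asymmetric weight $t_1^{\kappa-1}\, t_2^{\kappa-1}\, (1 - t_1 - t_2)^{\kappa}$; the extra power of the third simplex variable reflects the distinguished role of the direction $e_3$.

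The final step is to recognize this simplex integral as the claimed Humbert function. Using the integral representation (\ref{pi1}) with $b_1 = b_2 = \kappa$ and $c = 3\kappa + 1$, the exponent $c - b_1 - b_2 - 1 = \kappa$ matches exactly the power on $(1 - t_1 - t_2)$, which identifies the integral as $e^{x_3}\, \Phi_2^{(2)}[\kappa, \kappa; 3\kappa + 1; x_1 - x_3, x_2 - x_3]$; the equivalent $\Phi_2^{(3)}$ representation then follows from the Laplace-transform identity between $\Phi_2^{(n)}$ and $\Phi_2^{(n-1)}$ already used at the end of the proof of Theorem 3.1 (see \cite{dl, DD}). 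I expect the main obstacle to be precisely the degeneracy $W(e_3) = 0$: since both sides of the shift identity vanish at $\lambda = e_3$, the information must be extracted from the leading-order expansion in $\epsilon$, and careful bookkeeping of the shift constant $c_\kappa$ together with the normalizations in Corollary \ref{co1} at shifted parameter is needed to recover the exact Humbert parameter $3\kappa + 1$ (rather than $3\kappa$ or $3\kappa + 3$).
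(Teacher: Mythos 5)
Your setup (Opdam's shift principle plus the symmetrization \eqref{bes1} and the explicit simplex integral of Corollary \ref{co1}) is the same as the paper's, and your target integrand $t_1^{\kappa-1}t_2^{\kappa-1}(1-t_1-t_2)^{\kappa}$ together with the final identification via \eqref{pi1} and the Laplace-transform step is exactly right. But the core of your derivation has a genuine gap: the perturbation $\lambda(\epsilon)=(\epsilon,0,1)$ with an expansion in $\epsilon$ does not produce a closed system for $E_\kappa(x,e_3)$. At order $\epsilon^0$ the alternating identity is trivially $0=0$ (each stabilizer coset contributes with cancelling signs), and the symmetric identity only gives $E_\kappa(x,e_1)+E_\kappa(x,e_2)+E_\kappa(x,e_3)=3J_\kappa(x,e_3)$, which together with the $(1,2)$-stabilizer symmetry still does not pin down $E_\kappa(x,e_3)$: one can add any $(1,2)$-invariant function whose cyclic symmetrization vanishes. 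At order $\epsilon^1$ the alternating identity involves the directional $\lambda$-derivatives $(\partial_{\lambda_1}-\partial_{\lambda_2})E_\kappa(x,\cdot)$ evaluated at the collapsed orbit points $e_1,e_2,e_3$ --- these are \emph{new} unknowns, not expressible in terms of the kernel values you are trying to determine, so the bookkeeping you anticipate cannot be completed without further input.

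The missing idea is the paper's decisive step: do not perturb at all, but act with the Dunkl operator $D_3^{(x)}$ in the $x$-variable on the symmetrized identity
\begin{equation*}
E_{\kappa}(x,e_3)+E_{\kappa}(x,e_3\sigma)+E_{\kappa}(x,e_3\sigma^{2})
=\tfrac{1}{2}\bigl(\gamma_{\kappa}W(e_3)W(x)J_{\kappa+1}(x,e_3)+6J_{\kappa}(x,e_3)\bigr),
\end{equation*}
with $\sigma$ a $3$-cycle. The eigenvalue relation $D_{j}^{(x)}E_{\kappa}(x,\lambda)=\lambda_{j}E_{\kappa}(x,\lambda)$ then kills the terms at $e_3\sigma=e_1$ and $e_3\sigma^2=e_2$ (third coordinate zero) and leaves exactly $E_\kappa(x,e_3)$ on the left, while on the right the degeneracy $W(e_3)=0$ that you treat as an obstacle is precisely what removes the $J_{\kappa+1}$ term, and $D_3$ reduces to $\partial_3$ on the $S_3$-invariant function $J_\kappa(x,e_3)$. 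This yields $E_\kappa(x,e_3)=3\partial_3 J_\kappa(x,e_3)$, and differentiating Corollary \ref{co1} under the integral sign brings down the factor $t_3$ and gives your predicted integrand directly. I recommend replacing the $\epsilon$-expansion with this one-line application of $D_3^{(x)}$; the remainder of your argument then goes through as written.
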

\begin{proof} Recall that the shift principle
 implies (see Proposition 1.4 in \cite{du})
 \begin{eqnarray} \label{df}\sum_{\sigma\in S_{3}}det(\sigma)E_{\kappa}(x\sigma, \lambda )=\gamma_{\kappa}W(x)W(\lambda)J_{\kappa+1}(x,\lambda)  \end{eqnarray}
 where $\gamma_{\kappa}$ is a normalizing constant which will not be explicitly used here.
Combining  (\ref{df}) with (\ref{bes1}), we have
 \begin{eqnarray} \label{b2}&& E_{\kappa}(x,\lambda)+ E_{\kappa}(x,\lambda\sigma)+ E_{\kappa}(x,\lambda\sigma^{2})\\&=&\frac{1}{2}\left(\gamma_{\kappa}W(\lambda)W(x)J_{\kappa+1}(x,\lambda) +6J_{\kappa}(x, \lambda)\right) \nonumber
 \end{eqnarray}
where $\sigma=(1,3)(1,2)$, here $(i,j)$ is the transposition exchanging  the $i$th and $j$th coordinates of $x\in \mathbb{R}^{n}$. This relation (\ref{b2}) has been used to derive an integral expression for the Dunkl kernel of type $A_{2}$ in \cite{ba1}.

Now, we act with the Dunkl operator $D_{3}^{(x)}$ on both sides of (\ref{b2}) with $\lambda=e_{3}=(0, 0, 1)$. Using the relations
\begin{eqnarray*}D_{j}^{(x)}E_{\kappa}(x,\lambda)=\lambda_{j}E_{\kappa}(x,\lambda), \quad j=1,2, 3, \end{eqnarray*} this yields
 \begin{eqnarray*}
 E_{\kappa}(x, e_{3})&=&D_{3}E_{\kappa}(x, e_{3})\\&=&D_{3} (E_{\kappa}(x, e_{3})+ E_{\kappa}(x, e_{3}\sigma)+ E_{\kappa}(x, e_{3}\sigma^{2}))\\&=&\frac{1}{2}D_{3}\left(\gamma_{\kappa}W(e_{3})W(x)J_{\kappa+1}(x, e_{3}) +6J_{\kappa}(x, e_{3})\right)\\&=&3\partial_{3}J_{\kappa}(x, e_{3})\\
 &=& 3\partial_{3} c_{\kappa} \int_{T^{2}}e^{(x_{1}t_{1}+x_{2}t_{2}+x_{3}t_{3})}(t_{1}t_{2}t_{3})^{\kappa-1}dt_{1}dt_{2}
 \\&=&3c_{\kappa}\int_{T^{2}}e^{(x_{1}t_{1}+x_{2}t_{2}+x_{3}t_{3})}t_{3}(t_{1}t_{2}t_{3})^{\kappa-1}dt_{1}dt_{2}
 \\&=&e^{x_{3}}\Phi_{2}^{(2)}[\kappa,\kappa,3\kappa+1; x_{1}-x_{3}, x_{2}-x_{3}]
 \\&=&\Phi_{2}^{(3)}[\kappa,\kappa,\kappa,3\kappa+1; x_{1}, x_{2}, x_{3}].
  \end{eqnarray*}
Here the third identity holds by the fact  $W(e_{3})=0$ and that $J_{\kappa}(x, e_{3})$ is $S_{3}$-invariant in the variable $x$.

\end{proof}

In the following, we consider the general symmetric group $S_{n}$. Recalling the representation (\ref{bes1}), for $1\le \ell\le n$,  the generalized Bessel function of type $A_{n-1}$ can also be expressed as
\begin{eqnarray} \label{bes2} J_{\kappa}(e_{\ell}, x)=J_{\kappa}(e_{1}, x)=\frac{1}{n}\sum_{j=1}^{n}E_{\kappa}(e_{1}, x(1,j)).
\end{eqnarray}
Hence, $E_{\kappa}(x, e_{\ell})$ can be obtained by acting with $D_{m}^{(x)}$ on both sides of (\ref{bes2}) using the relations
\[D_{j}^{(x)}E_{\kappa}(x,\lambda)=\lambda_{j}E_{\kappa}(x,\lambda), \quad j=1,2,\ldots, n.\]
Similar as Theorem \ref{t2}, we then have a result for $A_{n-1}$,
\begin{theorem} For root  system $A_{n-1}$, $x\in \mathbb{R}^{n}$, the Dunkl kernel admits
 \begin{eqnarray*}
E_{\kappa}(x, e_{\ell})&=&V_{\kappa}\left(e^{\langle \cdot, e_{\ell}\rangle}\right)(x)\\&=&e^{x_{n}}\Phi_{2}^{(n-1)}(\kappa, \ldots, \underbrace{\kappa+1}_{\ell}, \ldots \kappa; n\kappa+1; x_{1}-x_{n}, \ldots, x_{n-1}-x_{n})
\\&=&nc_{\kappa}\int_{T^{n-1}}e^{\sum_{j=1}^{n}x_{j} t_{j} } t_{\ell}\prod_{j=1}^{n}t_{j}^{\kappa-1}dt_{1}\ldots dt_{n-1},
\end{eqnarray*}
where $t_{n}=1-\sum_{j=1}^{n-1}t_{j}$ and $c_{\kappa}=\Gamma (n\kappa)/(\Gamma(\kappa)^{n})$.
\end{theorem}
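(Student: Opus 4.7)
My plan is to extract $E_{\kappa}(x,e_{\ell})$ from the symmetric Bessel function $J_{\kappa}(e_{1},x)$ by a single application of the Dunkl operator $D_{\ell}^{(x)}$, and then differentiate the known integral representation of $J_{\kappa}(e_{1},x)$ to read off the stated formula. This follows the same mechanism as in Theorem \ref{t2}, but invoking the orbit identity (\ref{bes2}) directly; the shift-principle route used in the $A_{2}$ case is not needed here, because the alternating term it would produce vanishes at $\lambda=e_{\ell}$ (the Vandermonde $W(e_{\ell})$ is zero).

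The first step is to rewrite (\ref{bes2}), using the $W$-equivariance $E_{\kappa}(\sigma x,\sigma y)=E_{\kappa}(x,y)$ together with the symmetry $E_{\kappa}(x,y)=E_{\kappa}(y,x)$, in the form
\[
J_{\kappa}(e_{1},x) \;=\; \frac{1}{n}\sum_{j=1}^{n}E_{\kappa}(x,e_{j}).
\]
Applying $D_{\ell}^{(x)}$ to both sides, the right-hand side collapses via $D_{\ell}^{(x)}E_{\kappa}(x,e_{j})=\delta_{j\ell}E_{\kappa}(x,e_{\ell})$ to $\tfrac{1}{n}E_{\kappa}(x,e_{\ell})$, while on the left every difference quotient in the definition of $D_{\ell}$ vanishes because $J_{\kappa}(e_{1},\cdot)$ is $S_{n}$-invariant. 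This yields the clean identity $E_{\kappa}(x,e_{\ell})=n\,\partial_{\ell}J_{\kappa}(e_{1},x)$, valid on all of $\mathbb{R}^{n}$ by analytic continuation from $\mathbb{V}$. Now substitute the integral formula from Corollary \ref{co1} for $J_{\kappa}(e_{1},x)$ and differentiate under the integral sign; the derivative in $x_{\ell}$ brings down a factor $t_{\ell}$, which immediately produces the second claimed expression.

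To reach the Humbert-function form I would factor $e^{x_{n}}$ out of the exponential using $\sum_{j=1}^{n}t_{j}=1$, and then match the resulting integral against (\ref{pi1}). The extra $t_{\ell}$ raises the exponent of $t_{\ell}$ from $\kappa-1$ to $\kappa$; for $\ell<n$ this bumps the $\ell$-th parameter of $\Phi_{2}^{(n-1)}$ from $\kappa$ to $\kappa+1$, while for $\ell=n$ it is absorbed into the simplex-complement exponent $c-\sum_{j}b_{j}-1$, and in both cases one reads off $c=n\kappa+1$. The only real piece of bookkeeping — and the main (though minor) obstacle — is verifying that the prefactor $n c_{\kappa}$ equals the normalising constant $C_{b}^{(c)}$ in (\ref{pi1}); this is a short Gamma-function identity using $\Gamma(\kappa+1)=\kappa\,\Gamma(\kappa)$ and $\Gamma(n\kappa+1)=n\kappa\,\Gamma(n\kappa)$, to be handled separately for $\ell<n$ and for $\ell=n$, after which the proof is complete.
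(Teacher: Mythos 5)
Your proposal is correct and follows essentially the same route the paper takes: rewriting the orbit identity \eqref{bes2} as $J_{\kappa}(e_{1},x)=\tfrac{1}{n}\sum_{j}E_{\kappa}(x,e_{j})$, applying $D_{\ell}^{(x)}$ so that only the $j=\ell$ term survives while $D_{\ell}$ reduces to $\partial_{\ell}$ on the $S_{n}$-invariant left-hand side, and then differentiating the simplex integral of Corollary \ref{co1} to bring down the factor $t_{\ell}$. The paper only sketches this step ("similar as Theorem \ref{t2}"), and your write-up, including the Gamma-function bookkeeping for the constant $nc_{\kappa}=C_{b}^{(c)}$ in both the $\ell<n$ and $\ell=n$ cases, correctly fills in the details.
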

\begin{remark} This expression is first given in \cite{Xu}, Corollary 2.4.
\end{remark}
\begin{remark} Our results depend heavily on Theorem 2.1 of \cite{ST}, which only works for degenerated parameters. It is not clear to us how to obtain a full classical hypergeometric expression for the Heckman-Opdam function $F_{\kappa}(\lambda, x)$  at present. Hence, this approach can not lead to a full expression for the intertwining operator as  in the dihedral case. On the other side, these results both for the dihedral  and symmetric groups show that the Dunkl kernel at some special lines (corresponding to the degenerated parameters) behaves much  simpler  than in other points. For general root systems and those special lines, the Dunkl kernel and the intertwining operator  may have a close relationship with the Humbert functions as well.
\end{remark}
Since the intertwining operator maps polynomials of degree $m$ to polynomials of the same degree, we have
\[V_{\kappa}(x_{\ell}^{m} )=nc_{\kappa}\int_{T^{n-1}}(x_{1}t_{1}+x_{2}t_{2}+\cdots+x_{n}t_{n})^{m}t_{\ell} \prod_{j=1}^{n}t_{j}^{\kappa-1}dt_{1}dt_{2}\ldots dt_{n-1}  .\]
This  leads to an explicit expression for the intertwining operator when the  function is of a single component, which was obtained recently by Xu in \cite{Xu} Theorem 2.1.
\begin{theorem}
Let $f: \mathbb{R}\rightarrow \mathbb{R}$. For $1\le \ell\le n,$ define $F(x_{1}, x_{2}, \ldots, x_{n})=f(x_{\ell})$. Then the intertwining operator acting on $F(x)$ is given by
\[V_{\kappa}F(x)=c_{\kappa}^{(n)}\int_{T^{n-1}}f(x_{1}t_{1}+x_{2}t_{2}+\cdots+x_{n}t_{n} )t_{\ell}\prod_{j=1}t_{j}^{\kappa-1} dt_{1}dt_{2}\ldots dt_{n-1}\]
where $c_{\kappa}^{(n)}=nc_{\kappa}=\Gamma(n\kappa+1)/\left(\kappa \Gamma(\kappa)^{n}\right)$.
\end{theorem}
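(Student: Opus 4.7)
The plan is to derive the theorem directly from the monomial identity
\[
V_{\kappa}(x_{\ell}^{m})=nc_{\kappa}\int_{T^{n-1}}(x_{1}t_{1}+\cdots+x_{n}t_{n})^{m}\,t_{\ell}\prod_{j=1}^{n}t_{j}^{\kappa-1}\,dt_{1}\ldots dt_{n-1}
\]
which has just been established from the explicit expression for the Dunkl kernel $E_{\kappa}(x,e_{\ell})$. Since the intertwining operator is linear and preserves each homogeneous component, the identity immediately extends to any polynomial $p(x_{\ell})$ in the single variable $x_{\ell}$, giving the theorem for polynomial $f$.

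Next I would upgrade from polynomials to analytic $f$. Write $f(u)=\sum_{m\ge 0}a_{m}u^{m}$ on a disk of radius $R$; then for $x$ with $|x|_{\infty}<R$ we have $F(x)=\sum_{m}a_{m}x_{\ell}^{m}$ with absolute convergence on compact sets. Applying $V_{\kappa}$ term by term (using that $V_{\kappa}$ is continuous on polynomials of each fixed degree, or equivalently that it is the moment operator of the probability measure $d\mu_{x}$ of Rösler--Voit) and commuting sum with integral via dominated convergence on the compact simplex $T^{n-1}$, the sum reassembles to $f(x_{1}t_{1}+\cdots+x_{n}t_{n})$ inside the integral, yielding the claimed formula.

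To reach the stated generality (say $f$ merely continuous or $C^{\infty}$), I would then use a local polynomial or analytic approximation: for fixed $x\in\mathbb{R}^{n}$ the affine image $\{x_{1}t_{1}+\cdots+x_{n}t_{n}:t\in\overline{T^{n-1}}\}$ is compact, so Weierstrass approximation provides a polynomial sequence $p_{k}\to f$ uniformly on this compact interval. Both sides of the formula depend continuously on $f$ in this uniform sense (the right-hand side by dominated convergence, the left-hand side by continuity of $V_{\kappa}$ in an appropriate topology, e.g. the probability-measure representation $V_{\kappa}F(x)=\int F\,d\mu_{x}$ makes the bound $|V_{\kappa}F(x)|\le\sup|F|$ transparent), so passing to the limit concludes the proof.

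The main obstacle I expect is the last step: justifying that $V_{\kappa}F(x)$ is controlled by $\sup|F|$ on the support of the integration, so that $V_{\kappa}p_{k}(x)\to V_{\kappa}F(x)$. This is where the Rösler--Voit positivity of $d\mu_{x}$ is essential; without it one would need a much more delicate analytic continuation argument. Alternatively, as the remark after Theorem \ref{si1} indicates, one can sidestep the approximation entirely by taking the right-hand side as an ansatz and verifying the intertwining relations (\ref{ir1}) together with $V_{\kappa}1=1$ directly by differentiation under the integral sign, mirroring Xu's proof in \cite{Xu}; uniqueness of $V_{\kappa}$ on homogeneous polynomials then identifies the ansatz with $V_{\kappa}F$.
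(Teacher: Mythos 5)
Your proposal follows essentially the same route as the paper: the paper derives the monomial identity $V_{\kappa}(x_{\ell}^{m})=nc_{\kappa}\int_{T^{n-1}}(x_{1}t_{1}+\cdots+x_{n}t_{n})^{m}t_{\ell}\prod_{j}t_{j}^{\kappa-1}\,dt$ from its explicit formula for the Dunkl kernel $E_{\kappa}(x,e_{\ell})$ and then states the theorem as the immediate consequence for functions of a single component. Your additional care in extending from monomials to general $f$ (term-by-term summation for analytic $f$, then Weierstrass approximation controlled via the R\"osler--Voit probability-measure representation) fills in a step the paper leaves implicit, and is correct.
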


\section{Conclusion}
In this note, we have expressed the generalized Bessel function and Dunkl kernel of type $A_{n-1}$ in terms of the Humbert function $\Phi_{2}^{(n)}$, with one variable fixed. A new proof of Xu's integral formula for the intertwining operator was developed by these formulas. The same approach will also lead to  explicit expressions for the trigonometric Dunkl intertwining operator associated to the dihedral and symmetric groups.



\end{document}